\providecommand{\U}[1]{\protect\rule{.1in}{.1in}}
\newtheorem{theorem}{Theorem}
\newtheorem{example}[theorem]{Example}
\newtheorem{lemma}[theorem]{Lemma}
\newenvironment{proof}[1][Proof]{\noindent\textbf{#1.} }{\ \rule{0.5em}{0.5em}}
\begin{document}

\title{Completions of pairwise comparison data that minimize the triad measure of inconsistency}
\author{Susana Furtado\thanks{Corresponding author. Email: sbf@fep.up.pt.
orcid.org/0000-0003-0395-5972. This work was supported by the FCT-
Funda\c{c}\~{a}o para a Ci\^{e}ncia e Tecnologia under Grant UID/04561/2023.}\\CEMS.UL and Faculdade de Economia \\Universidade do Porto\\Rua Dr. Roberto Frias\\4200-464 Porto, Portugal
\and Charles R. Johnson \thanks{Email: crjmatrix@gmail.com. }\\225 West Tazewells Way\\Williamsburg, VA 23185}
\maketitle

\begin{abstract}
We consider incomplete pairwise comparison matrices and determine exactly when
they have a consistent completion and, if not, when they have a nearly
consistent completion. We use the maximum $3$-cycle product as a measure of
inconsistency and show that, when the graph of the specified entries is
chordal, a completion in which this measure is not increased is always
possible. Methodology to produce such completions is developed. Such
methodology may also be used to reduce inconsistency with few changes of comparisons.

\end{abstract}

\textbf{Keywords}: completion, consistent matrix, decision analysis,
inconsistency index, reciprocal matrix, triads

\textbf{MSC2020}: 15A83, 15B48, 90B50, 91B06

\section{Introduction}

An $n$-by-$n$ entry-wise positive matrix $A=[a_{ij}]$ is called
\emph{reciprocal} (\textit{pairwise comparison}) if $a_{ji}=\frac{1}{a_{ij}},$
for each pair $1\leq i,j\leq n.$ The diagonal entries are $1$, and the
off-diagonal entries represent pair-wise ratio comparisons among $n$
alternatives. The set of all such matrices is commonly denoted by
$\mathcal{PC}_{n}$. In a variety of decision models, it is desired to deduce a
weight vector $w$ from $A$ to be a cardinal ranking vector of the alternatives
that resembles the pair-wise comparisons (i.e. $\frac{w_{i}}{w_{j}}\approx
a_{ij}$). A reciprocal matrix that has rank $1$ ($a_{ij}a_{jk}=a_{ik}$, for
all $i,j,k$) is called \emph{consistent}. It means that the collection of
comparisons is internally consistent. In this case, $A=ww^{(-T)}$, for a
positive $n$-vector $w$ and its entry-wise inverse transpose $w^{(-T)}$. Then,
$w$ is a natural choice for the ranking vector. Unfortunately, consistency of
comparisons is unusual.

It may also happen that, for a variety of reasons, some comparisons are
missing. When some entries of $A\in\mathcal{PC}_{n}$ are unknown
(unspecified), $A$ is called a \emph{partial reciprocal matrix}, PRM. There
has been considerable interest in completing incomplete data to consistency or
"near consistency" \cite{Benitez,Agoston,Boz2,Csato,Csato2,Fedrizzi,Szado,Szy}%
. The purpose would be to then deduce $w$ based on some method for matrices in
$\mathcal{PC}_{n}$ \cite{blanq2006,CRWill,FJ1,FJ2,FJ3,FJ4,FJ7,saaty1977}. A
particular case is that of a \emph{partial consistent matrix}, PCM, which
means that every principal submatrix, consisting of specified entries, is
consistent. We give in Section \ref{s2} the broadest condition on the pattern
of such a matrix that is sufficient for a consistent completion. In Section
\ref{s3} we give the broadest condition on the data, irrespective of the
graph, that is necessary and sufficient for a consistent completion.

There are additional equivalent conditions for $A\in\mathcal{PC}_{n}$ to be
consistent: the product along every cycle in $A$ is $1$; and every $3$-by-$3$
principal submatrix of $A$ is consistent. The latter leads to a natural
measure of inconsistency for reciprocal matrices: the maximum $3$-cycle
product in $A,$ $MT(A)$. This applies equally well to partial matrices as to
matrices. For partial matrices, if there are no $3$-cycles we define
$MT(A)=1.$ Of course, if $MT(A)=1$ for $A\in\mathcal{PC}_{n}$, $A$ is
consistent. If $A$ were incomplete, it would have a consistent completion
(Sections \ref{s2} and \ref{s3}). If $MT(A)=\alpha>1$, with $\alpha$ "near"
$1$, we say that $A$ is \emph{nearly consistent}.

In Section \ref{s4}, we identify patterns of the specified entries of PRM's
that may be completed to reciprocal matrices for which the measure $MT$ does
not increase. In Section \ref{s5} we use this technology to reduce
inconsistency of matrices in PCM. Then, in Section \ref{s6} we conclude with
some final remarks. In Section \ref{s1}, we give the (considerable) necessary background.

In \cite{Kocz} a measure of inconsistency, $K(A)$, was defined for matrices
$A\in\mathcal{PC}_{3}$. In \cite{Boz4} it was noticed that $K(A)=\frac
{1}{1-MT(A)}.$ Later \cite{Kocz2}, the measure was extended to $A\in
\mathcal{PC}_{n}$ as $K(A)=\max_{B\in T}K(B),$ in which $T$ is the set of
$3$-by-$3$ principal submatrices of $A.$ Thus, it follows that, also for $A\in
PC_{n}$, $K(A)=\frac{1}{1-MT(A)}.$ We find $MT$ easy to use and it gives the
same answers as $K$, when we find completions not increasing $MT.$ See
\cite{Csato0} for an axiomatic discussion of measures related with triads. For
a general survey of inconsistency measures, see for example
\cite{Brunnelli,Brunelli2}.

\section{Background\label{s1}}

A partial matrix is one in which some entries are known ("specified") while
the remaining entries are "unspecified" and free to chosen. The graph of such
a partial matrix (on vertices $1,2,\ldots,n$) identifies the location of the
specified entries. It is undirected and has the edge $\{i,j\}$ if and only if
the $i,j$ entry is specified.

The notion of a PRM is that the partial matrix is square, the diagonal entries
are specified and equal to $1$, and that the pattern of the specified entries
is symmetric. Further, the matrix is partial reciprocal, i.e. if the $i,j$
entry is specified, it must be positive, and then the $j,i$ entry is specified
as its reciprocal.

In a variety of previously studied completion problems, it was important that
this graph be "chordal" \cite{Golumbic}. This means that every cycle on $4$ or
more vertices in the graph has a "chord" (an edge connecting $2$ non-adjacent
vertices of the cycle). A chordal graph may be viewed as a collection of
maximal cliques (complete induced subgraphs) overlapping in smaller cliques in
a tree-like way. An important fact about chordal graphs for completions is the following.

\begin{theorem}
\label{thcord}\cite{JGSW} If $G$ is a chordal graph, there is an ordering of
the edges not in $G$ so that addition of these edges, one-at-a-time, leaves a
new chordal graph each time.
\end{theorem}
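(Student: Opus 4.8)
The plan is to induct on the number of vertices $n$, organised around the existence of a simplicial vertex. Recall that a vertex $v$ is \emph{simplicial} in a graph $H$ if its neighborhood $N_{H}(v)$ is a clique, and that every chordal graph on at least one vertex has such a vertex \cite{Golumbic}. The single fact I would isolate first is the following preservation principle: if $v$ is simplicial in $H$, then $H$ is chordal if and only if $H-v$ is chordal. The forward direction is just that an induced subgraph of a chordal graph is chordal; for the converse, any chordless cycle of length $\geq 4$ in $H$ either misses $v$ (and so lies in $H-v$) or passes through $v$, in which case the two cycle-neighbors of $v$ both lie in the clique $N_{H}(v)$ and are therefore adjacent, producing a chord, a contradiction. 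I expect this lemma to carry essentially all the weight of the argument.

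For the inductive step, let $G$ be chordal on $n$ vertices with vertex set $V$, and pick a simplicial vertex $v$. Then $G-v$ is chordal on $n-1$ vertices, so by the inductive hypothesis there is an ordering $e_{1},\ldots,e_{m}$ of the edges not in $G-v$ whose successive addition keeps the graph chordal at every step, terminating in the complete graph on $V\setminus\{v\}$. I would add exactly these edges first, in the same order, now viewed inside the $n$-vertex graph. Throughout this phase none of the added edges is incident to $v$, so $N(v)$ is unchanged and remains a clique; hence $v$ stays simplicial, and since each intermediate $H-v$ is chordal by construction, the preservation principle shows each intermediate $H$ is chordal.

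At the end of the first phase the graph consists of a complete graph on $V\setminus\{v\}$ together with $v$ joined to its original neighborhood. In the second phase I would add the remaining non-edges of $G$, namely the pairs $\{v,w\}$ with $w$ a non-neighbor of $v$, in any order. After each such addition the neighborhood of $v$ is a subset of the clique $V\setminus\{v\}$, hence itself a clique, so $v$ remains simplicial and the preservation principle again yields chordality. When the last such edge is added, $v$ is adjacent to every other vertex and the graph is $K_{n}$. Concatenating the two phases orders all non-edges of $G$ while maintaining chordality, which proves the claim.

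The only point requiring care is the bookkeeping: one must check that the non-edges of $G-v$ together with the $v$-incident non-edges of $G$ are exactly the non-edges of $G$, and that simpliciality of $v$ is genuinely maintained at \emph{every} intermediate step, not merely at the endpoints of each phase. Neither is hard, but both should be verified explicitly. An alternative route proceeds through a clique tree of $G$: when $G$ is connected and incomplete one adds an edge joining a vertex of a leaf clique to a vertex of the adjacent clique across their separating clique, and a short separator argument shows the result stays chordal; this is the fallback I would use if the simplicial-vertex formulation turned out to be awkward to present.
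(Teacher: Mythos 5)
Your proof is correct, but note that the paper does not actually prove Theorem \ref{thcord}: the result is quoted from \cite{JGSW}, so there is no in-paper argument to compare against. In \cite{JGSW} the lemma is established via the clique structure of chordal graphs --- roughly, for a non-complete chordal graph one finds a missing edge joining vertices of two maximal cliques across a minimal vertex separator, shows its addition preserves chordality, and iterates; this is essentially the clique-tree fallback you sketch in your final paragraph. Your primary route is genuinely different and more elementary: induction on the number of vertices via a simplicial vertex $v$, resting on the preservation principle that for $v$ simplicial, $H$ is chordal if and only if $H-v$ is chordal. That principle is proved correctly (a chordless cycle of length at least $4$ through $v$ would force a chord between the two cycle-neighbors of $v$, which lie in the clique $N(v)$), and the two phases are sound: in the first phase the added edges avoid $v$, so $N(v)$ is unchanged and each intermediate $H-v$ is chordal by the inductive hypothesis; in the second phase $N(v)$ is always a subset of the clique $V\setminus\{v\}$ and $H-v$ is complete. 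The bookkeeping you flag does check out, since the non-edges of $G$ split exactly into the non-edges of $G-v$ and the pairs $\{v,w\}$ with $w\notin N(v)$, and adding edges never destroys the clique property of a neighborhood. What the separator-based argument of \cite{JGSW} buys is explicit local information about which edge may be added next in terms of the current clique structure, which is useful algorithmically; what your induction buys is a short, self-contained proof needing only Dirac's simplicial-vertex lemma (available in \cite{Golumbic}) and no clique-tree machinery.
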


Such an ordering is called a "chordal" ordering" of the missing edges. When
there are multiple missing edges, there are at least two chordal orderings.
The beauty of the chordal ordering is that it reduces completion problems with
a chordal graph to maximal single-unspecified-entry problems. Of course, a
graph with only one edge missing is chordal. After permutation similarity, a
single variable completion problem for a PRM appears as%
\begin{equation}
A(x)=\left[
\begin{array}
[c]{ccccc}%
1 & a_{12} & \cdots & a_{1,n-1} & x\\
\frac{1}{a_{12}} & 1 & \ddots & \ddots & a_{2n}\\
\vdots & \ddots & \ddots & \ddots & \vdots\\
\frac{1}{a_{1,n-1}} & \ddots & \ddots & \ddots & a_{n-1,n}\\
\frac{1}{x} & \frac{1}{a_{2n}} & \cdots & \frac{1}{a_{n-1,n}} & 1
\end{array}
\right]  , \label{AA}%
\end{equation}
in which $x$ is the unspecified entry. If $x$ can necessarily be chosen so
that $A(x)$ has a desired property, for each $x$ in a chordal ordering, then
the original partial matrix (with chordal graph) has a completion with the
desired property. For example, the case of positive definiteness of Hermitian
matrices was studied in \cite{JGSW}, where a good deal of the chordal
technology was developed. A useful property here is that, when adding the
initial edge in a chordal ordering to the graph, no new cycles involving
vertices outside the one variable completion problem appear.

\section{Consistent completions: the chordal case\label{s2}}

It is trivial that every PRM has reciprocal completions, so that targeting
desired properties of a reciprocal completion is natural both from a
mathematical and an applied point of view. The most natural property to target
is consistency. Since every principal submatrix of a consistent matrix is
consistent, in order for a PRM to have a consistent completion, it must be a
PCM. So, which patterns for PCM's necessarily ensure a consistent completion?
That question has a nice answer.

\begin{theorem}
\label{thcons}Every PCM with graph \thinspace$G$ has a consistent completion
if and only if every connected component of $G$ is chordal. If $G$ is also
connected, then the completion for each PCM is unique.
\end{theorem}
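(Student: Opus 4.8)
The plan is to prove existence and uniqueness together through the single-variable/chordal-ordering machinery, reducing the global statement to one solved local step, and to treat the two directions of the equivalence separately. First I would reduce to the connected case. If $G$ has components $G_1,\dots,G_c$ on disjoint vertex sets, the specified data split into independent blocks with no cross-component edges, and a consistent completion is exactly a rank-one matrix $ww^{(-T)}$. Choosing a positive weight vector on each component and then scaling the blocks arbitrarily relative to one another fills every cross-component entry consistently, so existence for $G$ is equivalent to existence on each $G_i$; the freedom in the relative scalings is precisely why uniqueness can only be expected when $c=1$. Thus for existence it suffices to prove that a connected chordal $G$ forces every PCM to complete, and for uniqueness I additionally assume $G$ connected.

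For sufficiency, assume $G$ connected and chordal. Using Theorem~\ref{thcord} I fix a chordal ordering of the missing edges and add them one at a time, so each step is a single-unspecified-entry problem of the form (\ref{AA}). When the first missing edge $\{1,n\}$ is added, chordality makes its endpoints share a clique separator $S$: every $s\in S$ is adjacent to both $1$ and $n$, and $S$ is itself a clique. I set $x:=a_{1s}a_{sn}$ for any $s\in S$. This is well defined, since for $s,s'\in S$ consistency of the specified cliques $\{1,s,s'\}$ and $\{n,s,s'\}$ gives $a_{1s}/a_{1s'}=a_{s's}=a_{s'n}/a_{sn}$, whence $a_{1s}a_{sn}=a_{1s'}a_{s'n}$. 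With this value every new triangle $\{1,n,t\}$ is consistent (its apex $t$ lies in $S$, and $a_{1n}=a_{1t}a_{tn}=x$), so the enlarged partial matrix is again a PCM; by Theorem~\ref{thcord} its graph is again chordal, and, as noted in the background, no cycles outside the local problem are created, so the choice cannot clash globally. Iterating through the ordering fills every entry, at which point all $3$-cycles equal $1$ and the completion is consistent. The same computation shows $x$ is \emph{forced}, not merely available, so when $G$ is connected the completion is unique; equivalently, fixing a root $r$ with $w_r=1$ and propagating $w_i$ along paths is well defined by the cycle condition, determining $A=ww^{(-T)}$ outright.

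For necessity I argue contrapositively: if some component is not chordal it contains a chordless cycle $C=v_1\cdots v_k$ with $k\ge 4$. The idea is to build a PCM on $G$ on which the product around $C$ is $\neq 1$, for then no rank-one completion can exist (a consistent matrix has product $1$ around every cycle). On the induced subgraph $C$ alone this is immediate: assign reciprocal values with $\prod_i a_{v_iv_{i+1}}\neq 1$, and since $C$ is chordless there are no $3$-cliques among $v_1,\dots,v_k$, so the data are vacuously consistent on every specified clique.

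I expect the main obstacle to be \emph{globalizing} this to a PCM on all of $G$: a vertex adjacent to several $v_i$ creates triangles whose consistency relations couple the $a_{v_iv_{i+1}}$ and could force the cycle product back to $1$. My plan is to assign every specified entry \emph{except those on $C$} as $w_i/w_j$ for a single potential $w$, so that each clique of $G$ is automatically consistent, and then to load the entire discrepancy onto $C$. Showing this is possible amounts to proving that the chordless cycle contributes a class not annihilated by the triangles of $G$ — in the additive (logarithmic) picture, that the indicator of $C$ is \emph{not} a combination of triangle boundaries. Verifying this independence, and thereby pinpointing chordality as the exact threshold at which triangle-consistency stops forcing full cycle-consistency, is the step I expect to carry the real weight of the proof.
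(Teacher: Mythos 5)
Your reduction to the disconnected case, your sufficiency argument, and your uniqueness argument are correct and follow essentially the paper's own route (chordal ordering, one forced entry at a time, block-joining of components); you in fact supply a detail the paper leaves implicit, namely why the locally forced value is well defined. One small repair there: take $S$ to be the \emph{entire} common neighbourhood of the endpoints $1,n$, not merely a clique separator, since you need every new triangle's apex to lie in $S$; that this full common neighbourhood is a clique follows from chordality of the graph \emph{before} the edge is added (two non-adjacent common neighbours $t,t'$ would give the chordless $4$-cycle $1,t,n,t'$), and then your telescoping computation shows the forced value is well defined and the enlarged partial matrix is again a PCM.

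The genuine gap is the necessity direction, and it sits exactly where you predicted the real weight would be --- but the step you postponed is not merely hard, it is false. The independence you need (that an induced chordless $k$-cycle, $k\geq4$, is never a combination of triangle boundaries) fails for the wheel $W_4$: rim $1,2,3,4$ in a $4$-cycle, hub $5$ adjacent to all four. $W_4$ is not chordal, yet the rim is the sum of the four triangles $\{i,i+1,5\}$: any PCM with this graph satisfies $a_{i,i+1}=a_{i5}/a_{i+1,5}$, so the rim product telescopes to $1$, and in fact every such PCM agrees with $ww^{(-T)}$ for $w=(a_{15},a_{25},a_{35},a_{45},1)^{T}$ and so has a consistent completion. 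Thus for this non-chordal $G$ there is no way to ``load the discrepancy onto $C$'': the coupling you feared, through triangles at vertices outside $C$, really does force the cycle product back to $1$. Note that this is not a defect of your plan alone: the paper's own necessity argument has the same gap, since it prescribes $a_{1k}\neq a_{12}a_{23}\cdots a_{k-1,k}$ on the chordless cycle without verifying that such values extend to a PCM on all of $G$, which for $W_4$ they cannot; indeed $W_4$ shows the stated ``only if'' fails as written. The condition actually equivalent to ``every PCM with graph $G$ has a consistent completion'' is that, within each component, every cycle of $G$ is a real linear combination of triangles of $G$ in the logarithmic edge space (then every PCM is PC$^{+}$ and Theorem~\ref{thcon} applies; otherwise a standard separation argument produces a PCM whose product along some cycle is not $1$), and this condition is strictly weaker than chordality.
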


\begin{proof}
When $G$ has only one non-edge, then $G$ is chordal, and a PCM $A(x)$ appears
as in (\ref{AA}), up to permutation similarity. Choosing $x=\frac
{a_{1,n-1}a_{2n}}{a_{2,n-1}}$ makes $A(x)$ rank $1$ and insures that $A(x)$ is
the unique consistent completion. Now, suppose that $G$ is chordal and
connected, and consider a chordal ordering of the missing edges of $G$. Choose
the unspecified entries of a PCM with graph $G$ one at a time in this order,
in sequence, so that each successive partial matrix remains PCM, in the manner
of the one unspecified entry case above, until the desired consistent
completion is attained.

In the event that $G$ is not connected, complete the principal submatrices of
the PCM (corresponding to connected components of $G$) as above. Then, for
adjacent maximal specified principal submatrices, say $ww^{(-T)}$ and
$vv^{(-T)},$ complete the unspecified blocks to get%
\begin{equation}
\left[
\begin{tabular}
[c]{ll}%
$ww^{(-T)}$ & $kwv^{(-T)}$\\
$\frac{1}{k}vw^{(-T)}$ & $vv^{(-T)}$%
\end{tabular}
\ \right]  , \label{mcomp}%
\end{equation}
for any $k>0.$ Matrix (\ref{mcomp}) is consistent, as it can be written as
$\ \left[
\begin{array}
[c]{c}%
kw\\
v
\end{array}
\right]  \left[
\begin{array}
[c]{c}%
kw\\
v
\end{array}
\right]  ^{(-T)}$, and these are the only consistent completions with the
given diagonal blocks. Iteration of this procedure provides the claimed completion.

When $G$ is not chordal, it must include, as an induced subgraph, a k-cycle,
$k\geq4$, without a chord. It may be assumed to be $\gamma=12\cdots k1.$ Then,
for $a_{1k}$ different from $a_{12}a_{23}\cdots a_{k-1,k}$, the product along
the k-cycle $\gamma$ in $A$ is not $1$. Thus, such a PCM matrix has no
consistent completion.
\end{proof}

\bigskip

Here is an example of a PCM whose graph is not chordal and that has no
consistent completion.

\begin{example}
\label{pc+}The matrix%
\[
A=\left[
\begin{array}
[c]{cccc}%
1 & 2 & x & 4\\
\frac{1}{2} & 1 & \frac{1}{3} & y\\
\frac{1}{x} & 3 & 1 & 5\\
\frac{1}{4} & \frac{1}{y} & \frac{1}{5} & 1
\end{array}
\right]
\]
is a PCM, but has no consistent completion. In this case $G$ is the $4$-cycle
$12341$ and $4=a_{14}\neq a_{12}a_{23}a_{34}=\frac{10}{3}$.
\end{example}

When the graph is not chordal, there may still be consistent completions. It
depends upon the data, and this is the subject of the next section.

\bigskip

It should be noted that, in the same way, a corresponding result for general
rank $1$ completions is valid. The rank of a partial matrix is the maximum
rank of a fully specified submatrix.

\begin{theorem}
\label{thrank1}Every rank $1$ partial matrix, the graph of whose specified
entries is $G$, has a rank $1$ completion if and only if every connected
component of $G$ is chordal. Moreover, if $G$ is connected, this completion is unique.
\end{theorem}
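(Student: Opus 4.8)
The plan is to follow the proof of Theorem \ref{thcons} line for line, replacing the consistent factorization $ww^{(-T)}$ by a general rank-$1$ factorization $uv^{T}$ throughout. By the definition of the rank of a partial matrix, a (combinatorially symmetric) partial matrix is a rank-$1$ partial matrix exactly when every fully specified submatrix has rank at most $1$; this condition plays the role that ``every specified principal submatrix is consistent'' played before, and the vanishing of all $2$-by-$2$ minors, $a_{ij}a_{kl}=a_{il}a_{kj}$, plays the role of the cycle-product condition. Throughout, the single missing edge $\{i,j\}$ now frees the two independent entries in positions $(i,j)$ and $(j,i)$.

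First I would settle the single-edge case. After a simultaneous row/column permutation the missing edge is $\{1,n\}$, all other entries are specified, and the graph is the complete graph on $\{1,\dots,n\}$ minus $\{1,n\}$, which is chordal. The two overlapping fully specified principal submatrices on $\{1,\dots,n-1\}$ and $\{2,\dots,n\}$ each have rank at most $1$, hence factor as $u'v'^{T}$ and $u''v''^{T}$; matching them on the common index set $\{2,\dots,n-1\}$ fixes the relative scaling and yields vectors $u,v$ with $a_{ij}=u_iv_j$ off the two free spots. Setting $a_{1n}=u_1v_n=\frac{a_{1,n-1}a_{2,n}}{a_{2,n-1}}$ (the direct analogue of the choice $x=\frac{a_{1,n-1}a_{2n}}{a_{2,n-1}}$) and, symmetrically, $a_{n1}=u_nv_1=\frac{a_{n-1,1}a_{n,2}}{a_{n-1,2}}$ produces the unique rank-$1$ completion.

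Next I would run the chordal-ordering induction exactly as in Theorem \ref{thcons}: for connected chordal $G$, fix a chordal ordering of the missing edges (Theorem \ref{thcord}) and fill them one pair at a time, each step being an instance of the single-edge problem that leaves a chordal graph and keeps every fully specified submatrix of rank at most $1$; uniqueness at each step gives uniqueness of the completion when $G$ is connected. For disconnected $G$ I would complete each component to a rank-$1$ block $uv^{T}$ and then glue adjacent completed blocks by the rank-$1$ analogue of (\ref{mcomp}): with diagonal blocks $uv^{T}$ and $\tilde u\tilde v^{T}$, any cross blocks $c\,u\tilde v^{T}$ and $\frac1c\,\tilde uv^{T}$ (for $c\neq 0$) complete the pair to the rank-$1$ matrix with first factor the stacking of $cu$ over $\tilde u$ and second factor the stacking of $v$ over $\frac1c\tilde v$; iterating over the tree of components finishes the ``if'' direction.

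For the ``only if'' direction, if some component of $G$ is not chordal it contains an induced chordless cycle $\gamma=12\cdots k1$ with $k\ge4$. Here the rank-$1$ cycle invariant is $\prod_{i}\frac{a_{i,i+1}}{a_{i+1,i}}=1$, which telescopes to $1$ for any $a_{ij}=u_iv_j$; I would assign the specified entries along $\gamma$ so that this product differs from $1$ while each edge's local constraint $a_{ii}a_{i+1,i+1}=a_{i,i+1}a_{i+1,i}$ still holds. Because $\gamma$ is chordless, the only maximal cliques among its vertices are single edges, so every fully specified submatrix on these vertices can be kept of rank at most $1$, yet no global rank-$1$ completion exists. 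I expect this last step to be the main obstacle: in the non-reciprocal setting the two triangular positions are independent and entries may vanish, so the clean cycle-product argument of Theorem \ref{thcons} must be replaced by the $2$-by-$2$-minor consistency argument around $\gamma$, and one must verify carefully that the obstructing data keep all proper specified submatrices of rank at most $1$ while still forcing a nonvanishing minor in every completion.
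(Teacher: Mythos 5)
Your overall route is the one the paper intends: the paper gives no separate proof of Theorem \ref{thrank1}, asserting only that it follows ``in the same way'' as Theorem \ref{thcons}, and your ``if'' direction, uniqueness argument, and gluing of components are faithful translations of that proof. But carrying the analogy out honestly runs into a problem exactly where you flagged ``the main obstacle,'' and the obstacle is worse than you suspect: for a chordless $4$-cycle the ``only if'' step is not merely delicate, it is impossible with nonzero data. Under the paper's definition of the rank of a partial matrix (\emph{all} fully specified submatrices count, not only principal ones), a rank $1$ partial matrix whose graph is the cycle $1\,2\,3\,4\,1$ must satisfy, besides the edge conditions $a_{ii}a_{jj}=a_{ij}a_{ji}$, the two non-principal conditions from rows $\{1,3\}$, columns $\{2,4\}$ and rows $\{2,4\}$, columns $\{1,3\}$, namely $a_{12}a_{34}=a_{14}a_{32}$ and $a_{21}a_{43}=a_{23}a_{41}$. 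These two identities immediately give $a_{12}a_{23}a_{34}a_{41}=(a_{12}a_{34})(a_{23}a_{41})=(a_{14}a_{32})(a_{21}a_{43})=a_{21}a_{32}a_{43}a_{14}$, which is precisely your cycle invariant; conversely, for nonzero data one can propagate $u_i,v_j$ around the cycle and these two conditions are exactly what is needed to close it up, so \emph{every} such partial matrix has a rank $1$ completion. Hence no counterexample of the kind you plan exists on a $4$-cycle, and the ``only if'' assertion fails there in the nonzero setting. (For an induced chordless cycle of length at least $5$ your plan does work: non-adjacent cycle vertices share at most one neighbor, so no extra $2$-by-$2$ fully specified submatrices arise, and taking all data $1$ except $a_{12}=2$, $a_{21}=\frac{1}{2}$ violates the invariant.)

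You might hope to obstruct the $4$-cycle using zero entries, but then you land in a regime where your ``if'' direction breaks: your single-edge step divides by $a_{2,n-1}$, and ``matching on the overlap fixes the relative scaling'' fails when the overlap block vanishes. Indeed, with zeros allowed the ``if'' half of the theorem is itself false: in
\[
\left[
\begin{array}
[c]{ccc}
0 & 1 & x\\
0 & 0 & 1\\
y & 0 & 0
\end{array}
\right]
\]
the graph is a connected chordal path and every fully specified submatrix has rank at most $1$, yet the minor on rows $\{1,2\}$, columns $\{2,3\}$ equals $1$ for every $x$, so no rank $1$ completion exists (similar examples destroy uniqueness). So under the paper's literal definition the two halves of the theorem cannot both be saved, and no line-for-line translation of Theorem \ref{thcons} can succeed. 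The statement your outline actually proves is the one obtained by reading ``rank $1$ partial matrix'' as the direct analogue of PCM --- every fully specified \emph{principal} submatrix has rank at most $1$ --- together with the hypothesis that all specified entries are nonzero; then your ``if'' and uniqueness arguments go through, and Example \ref{pc+}-type data (whose submatrix on rows $\{1,3\}$, columns $\{2,4\}$ is $\left[\begin{smallmatrix}2 & 4\\ 3 & 5\end{smallmatrix}\right]$, of rank $2$, and is therefore inadmissible under the paper's definition but admissible under the principal one) settles the ``only if'' part. You should state these hypotheses explicitly rather than inherit the paper's ambiguity.
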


Unfortunately, the same claim is not generally true for rank $k$, $k>1$.

\begin{example}
The partial matrix
\[
B(x,y)=\left[
\begin{array}
[c]{cccc}%
4 & 1 & 2 & x\\
1 & 2 & 3 & 2\\
1 & 2 & 3 & 4\\
y & 4 & 6 & 1
\end{array}
\right]
\]
has rank $2$ and the graph of its specified entries is chordal. However it has
no rank $2$ completion because the upper right $3$-by-$3$ submatrix has rank
$2$, but has no rank $2$ completion.
\end{example}

Thus, an additional condition is necessary. For the single unspecified entry
case that condition is, for the upper part,%
\[
\left[
\begin{array}
[c]{cc}%
a & x\\
A & b
\end{array}
\right]  ,
\]
that either $a\in\operatorname*{RowSpace}(A)$ or $b\in
\operatorname*{ColumnSpace}(A).$

\section{A data based approach to consistent completions\label{s3}}

Since every cycle product in a consistent matrix is $1$, we may adopt this
extended criterion for a partial matrix, the graph of whose specified entries
is not necessarily chordal. We call a PRM PC$^{+}$ if every fully specified
cycle product is $1$. Interestingly, this is not only necessary for a
consistent completion, but also sufficient, regardless of the graph of the
specified entries. As in the chordal case, though the completion process is
different, there is a unique completion if the graph is connected. There is a
family of completions in the not connected case that comes about exactly in
the same way as in the chordal case (see \ref{mcomp}). So, we only need to
focus on the connected case.

\begin{theorem}
\label{thcon}Suppose that $A$ is an $n$-by-$n$ PRM that is PC$^{+}.$ Then, $A$
has a consistent completion. If the graph $G(A)$ of the specified entries of
$A$ is connected$,$ this completion is unique.
\end{theorem}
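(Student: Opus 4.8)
The plan is to exploit the characterization of consistent reciprocal matrices as exactly those of the form $ww^{(-T)}$ (so that $a_{ij}=w_i/w_j$ for a positive vector $w$), and to build a suitable $w$ from the specified data using a spanning tree of the connected graph $G(A)$. As the text observes, the disconnected case reduces to the connected one via the block construction in (\ref{mcomp}), so I focus on connected $G=G(A)$.

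For existence, I would first fix a spanning tree $T$ of $G$ and set $w_1=1$. Because $T$ is connected and acyclic, for each vertex $j$ there is a unique path $1=v_0,v_1,\ldots,v_m=j$ in $T$, and I define $w_j$ by walking along this path, setting $w_{v_{k+1}}=w_{v_k}/a_{v_k v_{k+1}}$ at each step; equivalently $w_j=\prod_{k=0}^{m-1} a_{v_k v_{k+1}}^{-1}$. Acyclicity of $T$ guarantees this is well defined. By construction the candidate completion $C=ww^{(-T)}$, with $c_{ij}=w_i/w_j$, agrees with $A$ on every tree edge.

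The crucial step is to check that $C$ also agrees with $A$ on each specified edge $\{i,j\}$ not in $T$. Such an edge closes a unique cycle with the tree path joining $i$ and $j$, and every edge of this cycle lies in $G$; hence, by the PC$^+$ hypothesis, its product is $1$. Since the tree portion of the cycle telescopes to exactly $w_i/w_j$, the closing edge contributes $a_{ji}$, and the condition $(w_i/w_j)\,a_{ji}=1$ forces $a_{ij}=w_i/w_j=c_{ij}$. Thus $C$ matches all specified entries and is a consistent completion. I expect this verification---that the off-tree data is automatically reconciled by the single potential $w$---to be the heart of the argument, and the only place where PC$^+$, rather than mere reciprocity, is used.

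For uniqueness when $G$ is connected, I would note that any consistent completion is itself of the form $uu^{(-T)}$ for some positive $u$. Matching the tree edges forces $u_i/u_j=a_{ij}$ along $T$, which determines all ratios $u_i/u_j$ and hence fixes $u$ up to a positive scalar; since $uu^{(-T)}$ is invariant under scaling $u$, the completion is unique.
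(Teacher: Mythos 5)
Your proof is correct and follows essentially the same route as the paper: both fix a spanning tree $T$ of the connected graph $G(A)$, complete the tree data consistently, and use the PC$^{+}$ condition on the fundamental cycle of each off-tree specified edge to conclude that this completion agrees with all of $A$, with uniqueness following because the tree entries determine everything. The only difference is cosmetic: you realize the tree completion explicitly as $ww^{(-T)}$ via a vertex potential, whereas the paper obtains it by invoking Theorem \ref{thcons} (a tree being chordal).
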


\begin{proof}
Suppose that $G(A)$ is connected. Let $T$ be a spanning tree of $G$. Since $G$
is connected, $T$ involves all $n$ vertices of $G$. Since $T$ is a tree, $T$
is chordal, and $A(T),$ the partial matrix with entries from $A$ and graph
$T$, has a unique consistent completion $\widetilde{A}$. Because of the cycle
product condition, $\widetilde{A}$ agrees with $A$ in the specified entries
indicated by $G$. In fact, if $\{i,j\}$ is an edge in $G$ and not in $T$, then
there is a cycle with all edges in $T$ except the edge $\{i,j\}.$ Since the
corresponding cycle products in $\widetilde{A}$ and $A$ are both $1$ (the
former since $\widetilde{A}$ is consistent, and the latter by assumption), the
entry $i,j$ in both matrices coincide. Thus, $\widetilde{A}$ is the desired completion.
\end{proof}

\begin{example}
The matrix in Example \ref{pc+} is PCM but not PC$^{+}.$ The entries in
positions $1,4$ and $4,1$ would have to be $\frac{10}{3}$ and $\frac{3}{10}$
for the matrix to be $PC^{+}$. In that case a consistent completion exists by
Theorem \ref{thcon}. It is $ww^{-(T)}$ for any column $w$ of the matrix.
Taking $w$ to be the first column, that completion should satisfy
\[
\left[
\begin{array}
[c]{cccc}%
1 & 2 & x & \frac{10}{3}\\
\frac{1}{2} & 1 & \frac{1}{3} & y\\
\frac{1}{x} & 3 & 1 & 5\\
\frac{3}{10} & \frac{1}{y} & \frac{1}{5} & 1
\end{array}
\right]  =\left[
\begin{array}
[c]{c}%
1\\
\frac{1}{2}\\
\frac{1}{x}\\
\frac{3}{10}%
\end{array}
\right]  \left[
\begin{array}
[c]{c}%
1\\
2\\
x\\
\frac{10}{3}%
\end{array}
\right]  ^{T}=\left[
\begin{array}
[c]{cccc}%
1 & 2 & x & \frac{10}{3}\\
\frac{1}{2} & 1 & \frac{1}{2}x & \frac{5}{3}\\
\frac{1}{x} & \frac{2}{x} & 1 & \frac{10}{3x}\\
\frac{3}{10} & \frac{3}{5} & \frac{3}{10}x & 1
\end{array}
\right]  .
\]
Equating the $2,3$ and $4,2$ positions in the first and last matrices, gives
$x=\frac{2}{3}$ and $y=\frac{5}{3}$, the values that must be specified for $x$
and $y$ for the completion to be consistent.
\end{example}

Returning to the general rank $1$ case, it follows that if every specified
cycle product is $1$, there is a rank $1$ completion for a general graph of
the specified entries. However, this condition is not necessary for a rank $1$
completion. Is there a cycle condition on a partial rank $1$ matrix that is
necessary and sufficient for a rank $1$ completion?

\section{Near consistent completions: the chordal case\label{s4}}

What should be done, then, if a PRM that has no consistent completion is to be
completed. It is natural to try to complete it as consistently as possible.
However, as the specified entries remain, we cannot do anything about the
inconsistency that results from the specified portion. This is why we have
chosen an inconsistency measure that is "locally" determined and makes sense
for partial matrices. We would like to complete so as not to worsen the
measure of inconsistency $MT$. We can do this, at least, in the chordal case.
First we consider a single variable problem for the PRM $A=A(x)$ as in
(\ref{AA}). If $n=3,$ $A$ has a consistent completion. Suppose that $n\geq4$.

Recall that $A$ denotes a PRM, while, for a fixed $x_{0}>0,$ $A(x_{0}%
)\in\mathcal{PC}_{n}.$ So $MT(A)$ denotes the measure applied just to the
specified entries of $A(x),$ while $MT(A(x_{0}))$ applies to the complete matrix.

\bigskip For $F\subset\mathbb{R}$ a finite set, by $\max F$ we mean
$\max_{t\in F}t,$ and similarly for $\min.$

For $i<j<k$ or $i>j>k,$ denote by $c(i,j,k)$ the $3$-cycle product
$a_{ij}a_{jk}a_{ki}.$ Note that $c(k,j,i)$ is the reciprocal of $c(i,j,k)$.
These are the $2$ 3-cycle products in the submatrix of $A$ in rows and columns
$i,j,k.$ Let%
\[
C(A)=\left\{  c(i,j,k),\text{ }c(k,j,i):i<j<k\text{ with }(i,k)\neq
(1,n)\right\}  .
\]
This is the set of all 3-cycle products that do not include the entry $1,n$ or
$n,1.$ Let%
\[
C^{0}(A)=\left\{  c(1,j,n),\text{ }c(n,j,1):2\leq j\leq n-1\right\}
\]
be the set of all 3-cycle products that do include the entry $1,n$ or $n,1.$

We have $MT(A)=\max C(A)$. Of course, $MT(A)\geq1.$ Also, for $x_{0}>0$, we
have $MT(A(x_{0}))=MT(A)$ if and only if $\max C^{0}(A(x_{0}))\leq MT(A).$

Let $S(A)=\{a_{1j}a_{jn}:2\leq j\leq n-1\},$ $M_{S}(A)=\max S(A)$ and
$m_{S}(A)=\min S(A)$.

\begin{lemma}
\label{l1}We have $M_{S}(A)\leq\left(  MT(A)\right)  ^{2}m_{S}(A)$.
\end{lemma}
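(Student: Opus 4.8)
The plan is to select indices $p,q\in\{2,\ldots,n-1\}$ realizing the extremes, say $M_S(A)=a_{1p}a_{pn}$ and $m_S(A)=a_{1q}a_{qn}$, and then to rewrite the ratio $M_S(A)/m_S(A)$ as a quotient of two \emph{specified} $3$-cycle products, each of which belongs to $C(A)$. If $p=q$ the inequality is immediate because $MT(A)\geq 1$, so I would assume $p\neq q$; note this uses $n\geq 4$, which guarantees that $\{2,\ldots,n-1\}$ contains two distinct indices.

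The key step is a telescoping identity obtained by inserting the edge $\{p,q\}$. Since $p\neq q$ and both lie strictly between $1$ and $n$, the entries $a_{1p},a_{1q},a_{pn},a_{qn},a_{pq}$ are all specified, and neither of the triangles $\{1,p,q\}$ nor $\{p,q,n\}$ involves the unspecified position $(1,n)$. I would then write
\[
\frac{a_{1p}a_{pn}}{a_{1q}a_{qn}}
=\frac{a_{1p}a_{pq}a_{q1}}{a_{pq}a_{qn}a_{np}},
\]
where the inserted factor $a_{pq}$ cancels (using $a_{q1}=1/a_{1q}$ and $a_{np}=1/a_{pn}$). The numerator is the $3$-cycle product $c_1=c(1,p,q)$-type on $\{1,p,q\}$ and the denominator is the $3$-cycle product $c_2=c(p,q,n)$-type on $\{p,q,n\}$, so $M_S(A)/m_S(A)=c_1/c_2$ with $c_1,c_2\in C(A)$.

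Finally I would bound the quotient. The set $C(A)$ is closed under reciprocation, since it contains both $c(i,j,k)$ and $c(k,j,i)$ for each admissible triple; hence $\min C(A)=1/\max C(A)=1/MT(A)$, and every element of $C(A)$ lies in the interval $[1/MT(A),\,MT(A)]$. In particular $c_1\leq MT(A)$ and $c_2\geq 1/MT(A)$, whence $M_S(A)/m_S(A)=c_1/c_2\leq (MT(A))^{2}$, which rearranges to the claim.

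The calculations are routine once the structure is set up, so I expect the only genuine content to be the choice of the intermediate triangle decomposition: recognizing that inserting the single edge $a_{pq}$ splits the path ratio into exactly two $3$-cycle products, and then verifying that both triangles avoid the position $(1,n)$ so that the products lie in $C(A)$ rather than in $C^{0}(A)$. The closure of $C(A)$ under reciprocation, which converts the lower bound on $c_2$ into the second factor of $MT(A)$, is the other point I would be careful to state explicitly.
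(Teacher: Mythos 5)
Your proof is correct and is essentially the paper's own argument: the paper likewise inserts the factor $a_{j_1j_2}a_{j_2j_1}=1$ to split the ratio $\frac{a_{1j_1}a_{j_1n}}{a_{1j_2}a_{j_2n}}$ into the product $c(1,j_1,j_2)\,c(n,j_2,j_1)$ of two $3$-cycle products in $C(A)$, each bounded by $MT(A)$. Your quotient formulation $c_1/c_2$ with the reciprocal-closure of $C(A)$ is just a cosmetic repackaging of that same decomposition, since $1/c_2$ is exactly the paper's second factor.
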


\begin{proof}
We need to see that, for any $2\leq j_{1},j_{2}\leq n-1$, $\frac{a_{1j_{1}%
}a_{j_{1}n}}{a_{1j_{2}}a_{j_{2}n}}\leq M_{C}^{2}(A).$ If $j_{1}=j_{2}$ the
claim is trivial. Suppose that $j_{1}<j_{2}.$ Then
\begin{align*}
\frac{a_{1j_{1}}a_{j_{1}n}}{a_{1j_{2}}a_{j_{2}n}}  &  =(a_{1j_{1}}a_{j_{1}%
n})(a_{j_{2}1}a_{nj_{2}})=(a_{1j_{1}}a_{j_{1}j_{2}}a_{j_{2}1})(a_{nj_{2}%
}a_{j_{2}j_{1}}a_{j_{1}n})\\
&  =c(1,j_{1},j_{2})c(n,j_{2},j_{1})\leq MT(A)\text{ }MT(A)=\left(
MT(A)\right)  ^{2},
\end{align*}
in which the last inequality follows since $c(1,j_{1},j_{2}),c(n,j_{2}%
,j_{1})\in C(A)$. The proof is similar if $j_{2}<j_{1}$.
\end{proof}

\begin{theorem}
\label{th1}There is an $x_{0}>0$ such that
\[
MT(A(x_{0}))=MT(A)
\]
if and only if
\begin{equation}
\frac{1}{MT(A)}\text{ }M_{S}(A)\leq x_{0}\leq MT(A)\text{ }m_{S}(A).
\label{in}%
\end{equation}

\end{theorem}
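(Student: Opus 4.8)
The plan is to reduce everything to an explicit description of the set $C^{0}(A(x_{0}))$ and then invoke the criterion, recorded just before the statement, that $MT(A(x_{0}))=MT(A)$ holds exactly when $\max C^{0}(A(x_{0}))\leq MT(A)$. First I would compute the members of $C^{0}(A(x_{0}))$ directly. Since the unspecified entry is $a_{1n}=x_{0}$, we have $a_{n1}=\frac{1}{x_{0}}$, so for $2\leq j\leq n-1$,
\[
c(1,j,n)=a_{1j}a_{jn}a_{n1}=\frac{a_{1j}a_{jn}}{x_{0}},\qquad c(n,j,1)=a_{nj}a_{j1}a_{1n}=\frac{x_{0}}{a_{1j}a_{jn}}.
\]
Writing the element of $S(A)$ indexed by $j$ as $a_{1j}a_{jn}$, these two families are just the reciprocal pairs $\frac{a_{1j}a_{jn}}{x_{0}}$ and $\frac{x_{0}}{a_{1j}a_{jn}}$. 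Maximizing over $j$ within each family separately, and using $\max_{j}a_{1j}a_{jn}=M_{S}(A)$ and $\min_{j}a_{1j}a_{jn}=m_{S}(A)$, I obtain
\[
\max C^{0}(A(x_{0}))=\max\!\left\{ \frac{M_{S}(A)}{x_{0}},\ \frac{x_{0}}{m_{S}(A)}\right\} .
\]

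Next I would feed this into the recorded equivalence, namely that $MT(A(x_{0}))=MT(A)$ is equivalent to $\max C^{0}(A(x_{0}))\leq MT(A)$. The right-hand inequality, with the expression above substituted, splits into the two requirements $\frac{M_{S}(A)}{x_{0}}\leq MT(A)$ and $\frac{x_{0}}{m_{S}(A)}\leq MT(A)$. Since $x_{0}>0$ and $MT(A)>0$, these rearrange respectively to $\frac{1}{MT(A)}M_{S}(A)\leq x_{0}$ and $x_{0}\leq MT(A)\,m_{S}(A)$, and each rearrangement is a reversible manipulation. Together these two inequalities are precisely (\ref{in}), so the biconditional of the theorem for a fixed $x_{0}$ follows from a mechanical unwinding of the definitions of $C^{0}$, $S$, $M_{S}$, and $m_{S}$.

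The one step that is genuinely not free is the existence clause: the phrase ``there is an $x_{0}>0$'' requires that the interval prescribed by (\ref{in}) be nonempty (and it automatically consists of positive numbers since $M_{S}(A),m_{S}(A)>0$). I would close this by noting that $[\tfrac{1}{MT(A)}M_{S}(A),\ MT(A)\,m_{S}(A)]$ contains a point exactly when $\frac{1}{MT(A)}M_{S}(A)\leq MT(A)\,m_{S}(A)$, which rearranges to $M_{S}(A)\leq (MT(A))^{2}m_{S}(A)$ --- and this is exactly the content of Lemma \ref{l1}. Thus the whole substance of the existence half of the theorem is already packaged in Lemma \ref{l1}, and the remainder is bookkeeping; I expect no serious obstacle beyond carefully tracking which $3$-cycle products lie in $C^{0}$ and pairing them correctly with $M_{S}(A)$ and $m_{S}(A)$.
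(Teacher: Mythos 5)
Your proof is correct and takes essentially the same route as the paper's: both arguments convert each pair of conditions $c(1,j,n)\leq MT(A)$ and $c(n,j,1)\leq MT(A)$ into the two-sided bound $\frac{1}{MT(A)}a_{1j}a_{jn}\leq x_{0}\leq MT(A)\,a_{1j}a_{jn}$ and then pass to $M_{S}(A)$ and $m_{S}(A)$, using the recorded criterion $\max C^{0}(A(x_{0}))\leq MT(A)$. Your final observation that nonemptiness of the interval is exactly Lemma \ref{l1} is precisely how the paper handles existence, which it packages separately as Theorem \ref{th2}.
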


\begin{proof}
Let $c(1,j,n)\in C^{0}(A).$ We have that $c(1,j,n)\leq MT(A)$ is equivalent to
$a_{1j}a_{jn}\frac{1}{MT(A)}\leq x.$ Also, $c(n,j,1)\leq MT(A)$ is equivalent
to $x\leq a_{1j}a_{jn}MT(A).$ Thus the claim follows.
\end{proof}

\bigskip

From Lemma \ref{l1} and Theorem \ref{th1}, we get that there is at least one
completion of $A$ with the same triad inconsistency measure.

\begin{theorem}
\label{th2}There is an $x_{0}>0$ such that $MT(A(x_{0}))=MT(A)$. The range for
$x_{0}$ is given in (\ref{in}).
\end{theorem}

Then, armed with this fact, the following is a consequence of Theorem
\ref{thcord}.

\begin{theorem}
\label{thmt}Let $B$ be a PRM with chordal graph $G(B)$ for its specified
entries. Then, $B$ has a reciprocal completion $\widetilde{B}$ such that
$MT(\widetilde{B})=MT(B).$
\end{theorem}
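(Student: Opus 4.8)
The plan is to reduce the chordal case to repeated application of the single-variable result (Theorem~\ref{th2}) using the chordal ordering guaranteed by Theorem~\ref{thcord}. By Theorem~\ref{thcord}, the missing edges of $G(B)$ admit an ordering $e_1, e_2, \ldots, e_m$ such that adding them one at a time to $G(B)$ produces a chordal graph at every stage. I would process the unspecified entries in exactly this order, building a sequence of partial matrices $B = B_0, B_1, \ldots, B_m = \widetilde{B}$, where $B_t$ is obtained from $B_{t-1}$ by specifying the entry corresponding to edge $e_t$ (and its reciprocal). The goal at each step is to choose that entry so that $MT$ is not increased, and then invoke induction to conclude $MT(\widetilde{B}) = MT(B_0) = MT(B)$.

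The key structural fact that makes a single step work is the remark at the end of Section~\ref{s1}: when $e_t$ is the \emph{initial} missing edge of a chordal ordering for the graph $G(B_{t-1})$, filling it in creates no new cycles through vertices outside the one-variable completion problem it sits in. Concretely, the edge $e_t = \{p,q\}$ together with the already-specified entries forms, after permutation similarity, a partial matrix of the shape (\ref{AA}) with the single unspecified entry $x$ in the $1,n$ position; the relevant $3$-cycles that involve the new entry are exactly those in $C^0$ for that block. Applying Theorem~\ref{th2} to this block, I get an interval of admissible values of $x$ (the range in (\ref{in})) for which $MT$ of the completed block equals $MT$ of that block's specified part. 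Since no $3$-cycle outside the block involves the new entry, choosing $x$ in this interval leaves all $3$-cycle products elsewhere unchanged, so $MT(B_t) = MT(B_{t-1})$.

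Formally I would argue by induction on the number $m$ of missing edges. The base case $m=0$ is trivial ($B$ is already complete, so $\widetilde{B} = B$). For the inductive step, the ordering property of Theorem~\ref{thcord} ensures that $G(B_{t-1})$ together with $e_t$ is chordal and, more to the point, that $e_t$ can be taken as the first edge of a chordal ordering of the remaining non-edges, which is precisely what licenses the "no new external cycles" remark and the reduction to the shape (\ref{AA}). Thus each single-variable step preserves $MT$, and composing the $m$ steps gives $MT(\widetilde{B}) = MT(B)$.

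The main obstacle I anticipate is making rigorous the claim that the single-variable step genuinely preserves the \emph{global} measure $MT$ and not merely the measure of the local block. This hinges on verifying that the only $3$-cycles whose product changes when $x$ is specified are those internal to the one-variable completion problem of shape (\ref{AA})—equivalently, that the new entry participates in no $3$-cycle drawing on vertices outside that block. The cited remark asserts this for the initial edge of a chordal ordering, so the care required is in confirming that the chordal-ordering hypothesis places each $e_t$ in that initial position at its stage, and that $MT(B_{t-1})$ (the measure of the specified part before filling $e_t$) equals the $MT(A)$ appearing in Theorem~\ref{th2} for the local block, so that the interval (\ref{in}) is nonempty by Theorem~\ref{th2} and an admissible $x$ exists.
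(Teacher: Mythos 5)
Your proposal is correct and, for connected $G(B)$, it is essentially the paper's own proof: the paper disposes of the tree case by a consistent completion and otherwise states, in one line, that the result follows by applying Theorem \ref{th2} to each edge of a chordal ordering of the missing edges; your write-up supplies the justification that makes that line work (each new edge $\{p,q\}$, together with the common neighbors of $p$ and $q$, induces a block of shape (\ref{AA}), and every $3$-cycle through the new entry stays inside that block). Where you genuinely differ is the disconnected case, which the paper handles by a separate lemma: complete each connected component first, then border the completed diagonal blocks with off-diagonal blocks $kuv^{(-T)}$, where $u$ is a column of one block, $v$ a column of the other and $k>0$, checking that every mixed $3$-cycle product collapses to a $3$-cycle product already present in a diagonal block. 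Your uniform induction subsumes this case, since a disjoint union of chordal graphs is chordal, so Theorem \ref{thcord} still provides the ordering, and edges joining components give degenerate one-variable problems; what the paper's route buys is an explicit family of completions (arbitrary $k>0$) without having to examine such degenerate steps.

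Two points in your outline need adjustment, though neither is fatal. First, when the new edge $\{p,q\}$ has no common neighbor, the set $S$ is empty and (\ref{in}) is vacuous (any positive value works); with exactly one common neighbor $j$ the block is $3$-by-$3$ and the consistent value $x=a_{pj}a_{jq}$ works; Theorem \ref{th2}, stated for $n\geq 4$, covers neither situation literally, and these are exactly the steps your disconnected-case argument relies on. Second, your final proof obligation is stated too strongly: you do not need the local $MT(A)$ of the block to \emph{equal} $MT(B_{t-1})$ --- it can be strictly smaller --- you only need $MT(A)\leq MT(B_{t-1})$, which is automatic because the block's specified $3$-cycles are among those of $B_{t-1}$. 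Then Lemma \ref{l1} applied to the block makes the local interval (\ref{in}) nonempty, and since that interval only widens when $MT(A)$ is replaced by the larger value $MT(B_{t-1})$, any $x$ chosen in the local interval keeps all new $3$-cycle products at most $MT(B_{t-1})$, so the global measure is preserved at every step.
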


\begin{proof}
If $G(B)$ is only a tree, then $B$ has a consistent completion (Sections
\ref{s2} and \ref{s3}). If $G(B)$ is more than a tree and is connected, than
it must have $3$-cycles. In this case the result follows by applying Theorem
\ref{th2} to each edge in a chordal ordering of the missing edges.

Now assume that $G(B)$ is not connected. Then, complete the principal
submatrices corresponding to the connected components. Finally, the proof is
completed due to the following lemma.
\end{proof}

\begin{lemma}
If $A\in\mathcal{PC}_{n_{1}}$ and $B\in\mathcal{PC}_{n_{2}}$ with
$MT(A)=m_{1}$ and $MT(B)=m_{2}$, then there is an $n_{1}$-by-$n_{2}$ positive
matrix $C$ (nonunique) such that%
\[
R=\left[
\begin{array}
[c]{cc}%
A & C\\
C^{(-T)} & B
\end{array}
\right]  \in\mathcal{PC}_{n_{1}+n_{2}}%
\]
and $MT(R)=\max\{m_{1},m_{2}\}.$
\end{lemma}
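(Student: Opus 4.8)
The plan is to reduce everything to the four ways a triangle of $R$ can distribute its three vertices across the two diagonal blocks, and to build $C$ so that the two ``mixed'' types collapse to genuine $3$-cycle products of $A$ or of $B$. First I would record the easy direction: since $A$ and $B$ sit inside $R$ as principal submatrices, $MT(R)\ge MT(A)=m_1$ and $MT(R)\ge MT(B)=m_2$, so $MT(R)\ge\max\{m_1,m_2\}=:M$ for \emph{any} reciprocal completion. Hence the entire content is to exhibit a $C$ forcing $MT(R)\le M$.

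For the construction I would imitate the block gluing in the proof of Theorem~\ref{thcons}, but anchored at a reference vertex in each block rather than at a ranking vector (which need not exist here, as $A,B$ are only reciprocal, not consistent). Indexing the $A$-block by $p,q\in\{1,\dots,n_1\}$ and the $B$-block by $r,s\in\{1,\dots,n_2\}$, I would set
\[
C_{pr}=k\,A_{p1}\,B_{1r},\qquad k>0\text{ arbitrary.}
\]
The free scalar $k$, together with the freedom in choosing the reference vertices, accounts for the asserted non-uniqueness; $C$ is entry-wise positive, so $R$ is reciprocal.

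Then I would check the four triangle types. If all three vertices lie in the $A$-block (resp. the $B$-block), the cycle product is one of $A$'s (resp. $B$'s) own $3$-cycle products, hence $\le m_1$ (resp. $\le m_2$). For a mixed triangle on $p,q$ (in $A$) and $r$ (in $B$), the relevant cycle product is $A_{pq}\,C_{qr}/C_{pr}$; the factors $k$ and $B_{1r}$ cancel in the ratio $C_{qr}/C_{pr}=A_{q1}/A_{p1}$, leaving $A_{1p}A_{pq}A_{q1}$, a genuine $3$-cycle product of $A$ and so $\le m_1$. Symmetrically, a triangle on $p$ (in $A$) and $r,s$ (in $B$) gives $B_{rs}\,C_{pr}/C_{ps}=B_{1r}B_{rs}B_{s1}$, a $3$-cycle product of $B$ and so $\le m_2$. (The degenerate cases $p=1$, $q=1$, $r=1$, or $s=1$ simply return the value $1\le M$.) Thus every $3$-cycle product of $R$ is $\le M$, giving $MT(R)\le M$, and with the easy direction $MT(R)=M$.

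The one genuine idea — and the only place where something could go wrong — is the choice of $C$. The cancellation in the mixed case is exactly the statement that, in logarithmic (additive, skew-symmetric) coordinates, assigning the ``potential'' $\log A_{p1}$ to vertex $p$ converts the edge value $\log A_{pq}$ into the triangle sum around $1,p,q$, which the hypothesis $MT(A)=m_1$ already bounds; likewise for $B$. Recognizing that this potential adjustment makes the cross-block cycles inherit the bounds of the diagonal blocks is the crux, and the separable form $C_{pr}=kA_{p1}B_{1r}$ is what realizes both adjustments simultaneously. After that insight the verification is purely mechanical.
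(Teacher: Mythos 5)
Your proof is correct and takes essentially the same approach as the paper: the paper sets $C=kuv^{(-T)}$ with $u$ a column of $A$ and $v$ a column of $B$, which for $u,v$ the first columns is exactly your $C_{pr}=k\,A_{p1}B_{1r}$, and it verifies, via the same cancellation, that every mixed $3$-cycle product collapses to a $3$-cycle product of the block contributing two indices. Your explicit handling of the lower bound $MT(R)\geq\max\{m_{1},m_{2}\}$ and of the degenerate anchored triangles only spells out what the paper leaves implicit.
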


\begin{proof}
Let $C$ be any matrix of the form $kuv^{(-T)},$ in which $u$ is a column of
$A$, $v$ is a column of $B$ (i.e. $v^{(-T)}$ is a row of $B$) and $k>0.$ For
$3$-cycle products from $A$ or from $B$, there is nothing to show. Consider
any $3$-cycle with $2$ indices from one of $A$ or $B,$ and one index from the
other. Calculation then shows that the resulting $3$-cycle product coincides
with a $3$-cycle product from the principal submatrix with $2$ contributing
indices (after cancellation). For example, if $1\leq i_{1}<i_{2}\leq n_{1}$
and $n_{1}<j\leq n_{1}+n_{2},$ and $C$ is the product of the last column of
$A$ and the first row of $B$, the $3$-cycle products in the submatrix of
$R=[r_{ij}]$ in rows and columns $i_{1},i_{2},j$ are $r_{i_{1}i_{2}}%
(r_{i_{2}n_{1}}r_{n_{1}+1,j})(r_{n_{1}i_{1}}r_{j,n_{1}+1})=r_{i_{1}i_{2}%
}r_{i_{2}n_{1}}r_{n_{1}i_{1}},$ and its reciprocal. These are the $3$-cycle
products in the submatrix of $A$ in rows and columns $i_{1},i_{2},n_{1}$.
\end{proof}

\bigskip

Note that the completions in the proof of the lemma give a consistent matrix
if $A$ and $B$ are consistent. However, if $A$ or $B$ is not consistent,
different choices of $u$ and/or $v$ give different families of completions.
Observe that, if $v$ is the $i$th column of $B$, then $v^{(-T)}$ is the $i$th
row of $B$. As in the consistent case (Theorem \ref{thcons}), in Theorem
\ref{thmt} it is important that the graph be chordal. See Example \ref{pc+}.

We illustrate how to complete a PRM with chordal graph so that the completion
has the same $MT$ measure as the maximal specified blocks.

\begin{example}
Let%
\[
N=N(x,y)=\left[
\begin{array}
[c]{ccccc}%
1 & 6 & \frac{1}{2} & 1 & x\\
\frac{1}{6} & 1 & \frac{1}{3} & \frac{1}{2} & y\\
2 & 3 & 1 & 2 & 2\\
1 & 2 & \frac{1}{2} & 1 & \frac{1}{2}\\
\frac{1}{x} & \frac{1}{y} & \frac{1}{2} & 2 & 1
\end{array}
\right]  ,
\]
in which $x$ and $y$ are unspecified entries. The graph $G$ of the specified
entries in $N$ is chordal. Adding first edge $\{1,5\}$ and then $\{2,5\}$ to
$G,$ or first edge $\{2,5\}$ and then $\{1,5\}$ to $G$, are both chordal
orderings. We use the latter. Denote by $A=A(y)$ the principal submatrix of
$N$ obtained by deleting row and column $1$. Using the notation above and the
original indexing from $N$, we have
\[
C(A)=\{c(2,3,4),c(4,3,2),c(3,4,5),c(5,4,3)\}=\{\frac{4}{3},\frac{3}{4}%
,\frac{1}{2},2\},
\]
so that $MT(A)=2.$ Also,
\[
S(A)=\{a_{23}a_{35},\text{ }a_{24}a_{45}\}=\{\frac{2}{3},\frac{1}{4}\},
\]
so that $m_{S}(A)=\frac{1}{4}$ and $M_{S}(A)=\frac{2}{3}.$ Thus, by Theorem
\ref{th1}$,$ the completion of $A(y)$ has the same $MT$ measure as the maximal
specified blocks if and only if $\frac{1}{3}\leq y\leq\frac{1}{2}.$ If, in
addition, we want to minimize the maximum $3$-cycle products involving the
entries $y$ and $\frac{1}{y}$, we get%
\begin{align*}
\min_{\frac{1}{3}\leq y\leq\frac{1}{2}}\max C^{0}(A)  &  =\min_{\frac{1}%
{3}\leq y\leq\frac{1}{2}}\max\left\{
c(2,3,5),c(5,3,2),c(2,4,5),c(5,4,2)\right\} \\
&  =\min_{\frac{1}{3}\leq y\leq\frac{1}{2}}\max\{\frac{2}{3y},\frac{3y}%
{2},\frac{1}{4y},4y\}=\min_{\frac{1}{3}\leq y\leq\frac{1}{2}}\max\{\frac
{2}{3y},4y\}=\frac{2\sqrt{6}}{3}.
\end{align*}
The maximum is attained by $y=\frac{\sqrt{6}}{6}.$

Let $B=B(x)=N(x,\frac{\sqrt{6}}{6}).$ Denote by $C(A(\frac{\sqrt{6}}{6}))$ the
set of all the 3-cycle products in the (complete) matrix $A(\frac{\sqrt{6}}%
{6})$. We then have
\begin{align*}
C(B)  &  =\{c(1,2,3),c(3,2,1),c(1,2,4),c(4,2,1),c(1,3,4),c(4,3,1)\}\cup
C(A(\frac{\sqrt{6}}{6}))\\
&  =\{4,\frac{1}{4},3,\frac{1}{3},1,1\}\cup C(A(\frac{\sqrt{6}}{6})).
\end{align*}
Thus, $MT(B)=4$, as $\max C(A(\frac{\sqrt{6}}{6}))=MT(A(\frac{\sqrt{6}}%
{6}))=MT(A)=2$. Also,
\[
S(B)=\{a_{12}a_{25},a_{13}a_{35},\text{ }a_{14}a_{45}\}=\{\sqrt{6},1,\frac
{1}{2}\},
\]
so that $m_{S}(B)=\frac{1}{2}$ and $M_{S}(B)=\sqrt{6}.$ Thus, by Theorem
\ref{th1}$,$ the completion of $B(x)$ has the same $MT$ measure as the maximal
specified blocks if and only if $\frac{\sqrt{6}}{4}\leq x\leq2.$ If, in
addition, we want to minimize the maximum $3$-cycle products involving the
entries $x$ and $\frac{1}{x}$, we get%
\begin{align*}
\min_{\frac{\sqrt{6}}{4}\leq x\leq2}\max C^{0}(B)  &  =\min_{\frac{\sqrt{6}%
}{4}\leq x\leq2}\max\left\{
c(1,2,5),c(5,2,1),c(1,3,5),c(5,3,1),c(1,4,5),c(5,4,1)\right\} \\
&  =\min_{\frac{\sqrt{6}}{4}\leq x\leq2}\max\{\frac{\sqrt{6}}{x},\frac
{x}{\sqrt{6}},\frac{1}{x},x,\frac{1}{2x},2x\}=\min_{\frac{\sqrt{6}}{4}\leq
x\leq2}\max\{\frac{\sqrt{6}}{x},2x\}=\sqrt{2\sqrt{6}}.
\end{align*}
The maximum is attained by $x=\sqrt{\frac{1}{2}\sqrt{6}}.$

Thus, we obtain the following completion%
\[
\widetilde{N}=\left[
\begin{array}
[c]{ccccc}%
1 & 3 & \frac{1}{2} & 1 & \sqrt{\frac{1}{2}\sqrt{6}}\\
\frac{1}{3} & 1 & \frac{1}{3} & \frac{1}{2} & \frac{\sqrt{6}}{6}\\
2 & 3 & 1 & 2 & 2\\
1 & 2 & \frac{1}{2} & 1 & \frac{1}{2}\\
\sqrt{\frac{1}{3}\sqrt{6}} & \sqrt{6} & \frac{1}{2} & 2 & 1
\end{array}
\right]  ,
\]
with $MT(\widetilde{N})=4$. Note that we obtained irrational values for $x$
and $y$ only because we chose a criterion in addition to membership in the
intervals. Since
\begin{align*}
MT(N)  &  =\max\{c(1,2,3),c(3,2,1),c(1,2,4),c(4,2,1),c(1,3,4),\\
&  c(4,3,1),c(2,3,4),c(4,3,2),c(3,4,5),c(5,4,3)\}\\
&  =4,
\end{align*}
we have $MT(\widetilde{N})=MT(N)=4$.

If
\[
P=\left[
\begin{array}
[c]{ccc}%
1 & 2 & \frac{1}{3}\\
\frac{1}{2} & 1 & \frac{1}{3}\\
3 & 3 & 1
\end{array}
\right]  ,
\]
a completion of
\[
Q=\left[
\begin{array}
[c]{cc}%
\widetilde{N} & ?\\
? & P
\end{array}
\right]
\]
that does not increase $MT(Q)$ is, for example,%
\[
\left[
\begin{array}
[c]{cc}%
\widetilde{N} & uv^{(-T)}\\
vu^{(-T)} & P
\end{array}
\right]  ,
\]
with
\[
u=\left[
\begin{array}
[c]{c}%
\frac{1}{2}\\
\frac{1}{3}\\
1\\
\frac{1}{2}\\
\frac{1}{2}%
\end{array}
\right]  \text{ and }v=\left[
\begin{array}
[c]{c}%
1\\
\frac{1}{2}\\
3
\end{array}
\right]  \text{,}%
\]
the middle column of $\widetilde{N}$ and the first column of $P$, respectively.
\end{example}

\section{Reducing inconsistency\label{s5}}

We note that we may use the completion technology of the last section to
reduce the inconsistency of a conventional reciprocal matrix. This is an
example of when completion theory might be used, even when there is no missing
data. One reason to reduce inconsistency is that a view of efficient vectors
for $A\in\mathcal{PC}_{n}$ \cite{blanq2006,FJ1,FJ2,FJ3,FJ4,FJ7} is to find a
nearly consistent matrix and then take one of its columns as an efficient
vector. We continue with $MT$ as our measure of inconsistency. Other
approaches to reducing inconsistency may be found for example in
\cite{Bozred,Boz3}.

Suppose that we wish to change an entry of $A$ to reduce $MT(A)$ and, suppose,
for simplicity, that there are no ties for the $3$-cycle product achieving
$MT(A)$. Identify the worst $3$-cycle and suppose without loss of generality
that it is $1nj1$. Replace the $1,n$ entry of $A$ by variable $x$ and consider
the one variable chordal problem treated in Section \ref{s4}. Choose a
solution $x_{0}$ that does not increase $MT$ of the incomplete matrix. Now,
since there were no ties, $MT(A(x_{0}))<MT(A).$ Once $MT$ is decreased, we may
continue in the same way, as desired, modifying another entry (as long as
there are still no ties).

\section{Conclusions\label{s6}}

We have two goals here. 1) To give more transparent explanations of when
incomplete data has a consistent completion, based either on the pattern of
the data (and its partial consistency), or on numerical conditions on the data
generally. 2) When a consistent completion does not exist, we adapt our
technology to complete reciprocal matrices, so as not to increase a triad
measure of inconsistency. The same technology can be used to reduce
inconsistency in a complete reciprocal matrix by changing a few entries.
Chordal graphs play an important role.

\bigskip

There are no conflicts of interest.

\bigskip

\end{document}